\documentclass[a4paper, 12pt, DIV=10]{scrartcl}

\setlength{\emergencystretch}{1em}

\usepackage[utf8]{inputenc}
\usepackage[T1]{fontenc}
\usepackage[english]{babel}
\usepackage{graphicx,xcolor}
\usepackage{enumerate}
\usepackage{amsmath, amsfonts, amssymb,amsthm}

\usepackage{libertine}
\usepackage[libertine]{newtxmath}
\usepackage[lining]{FiraSans}
\setkomafont{disposition}{\firamedium}

\usepackage[colorlinks=true, linkcolor=blue, filecolor=magenta, urlcolor=cyan, citecolor=gray]{hyperref}
\usepackage{cleveref}
\crefname{chapter}{Chapter}{Chapters}
\crefname{section}{Section}{Sections}
\crefname{subsection}{Section}{Sections}
\crefname{subsubsection}{Section}{Sections}
\crefname{figure}{Figure}{Figures}
\crefname{table}{Table}{Tables}
\crefname{equation}{}{}

\bibliographystyle{amsplain}

\numberwithin{equation}{section}
\theoremstyle{definition}

\crefname{question}{Question}{Questions}

\crefname{str}{Strategy}{Strategies}
\newtheorem{definition}{Definition}[section]
\crefname{definition}{Definition}{Definitions}

\crefname{ex}{Example}{Examples}

\crefname{rmk}{Remark}{Remarks}
\theoremstyle{plain}
\newtheorem{thm}[definition]{Theorem}
\crefname{thm}{Theorem}{Theorems}

\crefname{conj}{Conjecture}{Conjectures}

\crefname{lemma}{Lemma}{Lemmas}

\crefname{cor}{Corollary}{Corollaries}

\crefname{claim}{Claim}{Claims}
\newtheorem{prop}[definition]{Proposition}
\crefname{prop}{Proposition}{Propositions}
\newtheorem{obs}[definition]{Observation}
\crefname{obs}{Observation}{Observations}


\newcommand{\N}{\mathbb{N}}

\newcommand{\cF}{\mathcal{F}}

\newcommand{\cI}{\mathcal{I}}

\newcommand{\cL}{\mathcal{L}}
\newcommand{\cP}{\mathcal{P}}


\author{Jan Corsten\thanks{Department of Mathematics, LSE, London WC2A 2AE, Email: \href{j.corsten@lse.ac.uk}{j.corsten@lse.ac.uk}}}
\date{\today}
\title{A note on the grid Ramsey problem\footnote{An extended abstract of this note has been published in Electronic Notes in Discrete Mathematics \textbf{61} (2017), 287--292.}}

\begin{document}

\maketitle

\begin{abstract}
The \emph{grid Ramsey number} $ G(r) $ is the smallest number $ n $ such that every edge-colouring of the grid graph $\Gamma_{n,n} := K_n \times K_n$ with $r$ colours induces a rectangle whose parallel edges receive the same colour. We show $ G(r) \leq r^{\binom{r+1}{2}} - \left( 1/4 - o(1) \right) r^{\binom{r}{2}+1} $, slightly improving the currently best known upper bound due to Gy\'arf\'as.
\end{abstract}


\section{Introduction}
Hales-Jewett's theorem is unquestionably one of the most important results in Ramsey theory; in \cite{RamseyTheory} Graham, Rothschild and Spencer  write that ``without it, Ramsey theory would more properly be called Ramseyan theorems''. To state the theorem we need to introduce some notation. Given positive integers $a$ and $n$, a \emph{combinatorial line} is a set of the form \[ \{ x \in [a]^n : x_i = x_j \text{ for all } i,j \in I \text{ and } x_i = a_i \text{ for all } i \not \in I \}, \] where $ I \subset [n] $ is a non-empty set and $a_i \in [a]$ is a fixed constant for every $ i \in I \setminus [n] $. Hales-Jewett's theorem \cite{HalesJewett} states that for all positive integers $ a $ and $ r $, there exists an integer $ n $ such that every colouring of $ [a]^n $ with $r$ colours induces a monochromatic combinatorial line.

The Hales-Jewett number $ HJ(a,r) $ is the smallest number $ n $ for which the above assertion is true. Originally, Hales and Jewett used a double induction in a product-argument, which yielded in an upper bound for $HJ(a,r)$ of Ackermann type. In \cite{Shelah_HJ} Shelah presented a new proof giving a primitive recursive upper bound by avoiding this double-induction. The key lemma of the proof is known as ``Shelah's Cube Lemma''. Especially the easiest non-trivial case of this lemma, the so called \emph{grid Ramsey problem}, attracted many researchers in the following years. To state this problem we begin with some definitions.

\begin{definition}
	For positive integers $ m $ and $ n $, the \emph{grid graph} $ \Gamma_{m,n} $ is the graph product $ K_m \times K_n $, that is $ V \left(\Gamma_{m,n}\right) = [m] \times [n] $ and $ \{(i,j),(i',j')\} \in E \left(\Gamma_{m,n}\right)$ if and only if either $ i=j $ or $ i'=j' $. 
\end{definition}

In other words, $ \Gamma_{m,n} $ consists of $ n $ vertical copies of $ K_m $ and $ m $ horizontal copies of $ K_n $. A \emph{rectangle} is a set of four vertices of the form $ \left\{ (i,j), (i',j), (i,j'), (i',j') \right\} $, where $ 1 \leq i < i' \leq m$ and $ 1 \leq j < j' \leq n$. Given a colouring of the edges of the grid graph, we call a rectangle alternating if its parallel edges receive the same colour. The grid Ramsey problem is to determine how large $m$ and $n$ have to be to guarantee an alternating rectangle in every colouring of $ E(\Gamma_{m,n}) $ with $r$ colours.

\begin{definition}
For positive integers $m,n$, let $ g(m,n) $ be the smallest number $ r $ such that there exists an $r$-colouring of $E(\Gamma_{m,n})$ without alternating rectangles.
\end{definition}

The diagonal case $ m=n $ is of greatest interest to us. In this case it is more common to write $ G(r) $ for the smallest number $ n $ such that every $r$-colouring of $E(\Gamma_{n,n})$ induces an alternating rectangle.

Shelah originally proved $G(r) \leq r^{\binom{r+1}{2}}+1$, in fact he showed the stronger statement $g(r+1, r^{\binom{r+1}{2}}+1) \geq r+1 $.
To see this, let $m=r+1$, $ n=r^{\binom{r+1}{2}}+1$ and consider an arbitrary $r$-colouring $ \chi $ of $E(\Gamma_{m,n})$. Since the number of different $r$-colourings of $K_{r+1}$ is $n-1$, two columns $i<j$ are coloured identically. Furthermore, by the pigeonhole principle, two of the $m = r+1$ horizontal edges between columns $i$ and $ j$ have the same colour, resulting in an alternating rectangle.
A trivial bound in the other direction is given by $ g(m,n) \leq \min \{m,n\} $ and $ G(r) \geq r + 1 $.
To see this, assume without loss of generality that $m \leq n$ and colour every horizontal edge by its row-index.

Improving these bounds immediately became a widely studied problem but progress has been slow. In \cite{RamseyTheory} Graham, Rothschild and Spencer asked whether $G(r)$ grows polynomially in $r$ and wrote ``a polynomial upper bound on $G(r)$ might well lead to a towerian upper bound to $HJ(a,2)$. Even if not, it is a certainly interesting problem for its own sake''.

Although both upper and lower bound follow from easy arguments and are very far apart, improving these bounds appeared to be difficult. The first improvements were made by Heinrich \cite{Heinrich} and by Faudree, Gy\'arf\'as and Sz\H onyi \cite{Faudree} who showed that $ G(r) \geq \Omega(r^3) $. The first and so far only improvement to the upper bound was made by Gy\'arf\'as \cite{gyarfas} who showed $ G(r) \leq r^{\binom{r+1}{2}} - r^{\binom{r-1}{2}+1} +1 $ improving Shelah's result by a small additive term. 

For a long time there was no progress until Conlon, Fox, Lee and Sudakov \cite{CFLS_GridRamsey} showed recently that $ G(r) $ grows super-polynomially in $ r $, thus answering the question of Graham, Rothschild and Spencer in \cite{RamseyTheory}.

Here we prove the following theorem, which gives another small improvement to the upper bound.

\begin{thm} \label{thm:diag}
	Let $ r \geq 2 $,  $m =  r^{\binom{r+1}{2}} - \lfloor r/4 \rfloor \cdot r^{\binom{r}{2}} +1$ and $ n= \frac12 \cdot r^{\binom{r+1}{2}}$. Then $ g(m,n) \geq r+1 $ and in particular we have $ G(r) \leq r^{\binom{r+1}{2}} - \left( 1/4 - o(1) \right) r^{\binom{r}{2}+1} $.
\end{thm}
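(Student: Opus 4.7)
I would argue by contradiction: suppose $\chi : E(\Gamma_{m,n}) \to [r]$ is an $r$-colouring admitting no alternating rectangle. Write $\chi_j : E(K_m) \to [r]$ for the restriction to the $j$-th vertical copy of $K_m$ and $h_{jj'} : [m] \to [r]$ for the map sending a row $i$ to the colour of the horizontal edge $(i,j)(i,j')$. My starting point is Shelah's classical observation: for every $(r+1)$-subset $T \subseteq [m]$ the restrictions $\chi_j|_T$ must be pairwise distinct over $j \in [n]$, since otherwise the $r+1$ horizontal edges $\{h_{jj'}(i) : i \in T\}$ would contain two of the same colour by pigeonhole and, combined with the vertical agreement, produce an alternating rectangle. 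This already yields $n \leq r^{\binom{r+1}{2}} = M$, comfortably satisfied by $n = M/2$; so the contradiction must crucially exploit that $m$ is nearly $M$.

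The next step is to relax the subset size from $r+1$ to $r$ and iterate pigeonhole. If $T_0 \subseteq [m]$ has $|T_0| = r$ and two columns $j, j'$ satisfy $\chi_j|_{T_0} = \chi_{j'}|_{T_0}$, then avoiding an alternating rectangle inside $T_0$ forces $h_{jj'}|_{T_0}$ to be a \emph{bijection} $T_0 \to [r]$. I would partition the $n = \tfrac{1}{2}\, r^{r+\binom{r}{2}}$ columns by their $K_{T_0}$-restriction into $r^{\binom{r}{2}}$ classes; pigeonhole yields a class $C$ of size $\geq r^r/2$. Picking a reference column $j_0 \in C$ and pigeonholing again on the bijection $h_{j_0 j}|_{T_0}$ gives a sub-class $C' \subseteq C$ of size $\geq r^r / (2 \cdot r!)$ on which $h_{j_0 j}|_{T_0}$ equals a single fixed $\pi \in S_r$. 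For each $j \in C' \setminus \{j_0\}$ and each outside row $i^* \in [m] \setminus T_0$, the row $t^* = \pi^{-1}(h_{j_0 j}(i^*)) \in T_0$ satisfies $h_{j_0 j}(i^*) = h_{j_0 j}(t^*)$, so the no-alternating-rectangle hypothesis forces the vertical disagreement $\chi_{j_0}(i^*, t^*) \neq \chi_j(i^*, t^*)$: each outside row thus prescribes, for every column of $C'$, a forbidden colour at a specific vertical edge.

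Finally, I would sum these forced disagreements over the $m - r \approx M$ outside rows and over columns in $C'$, combining with further pigeonholes on the colour values and on the target rows in $T_0$, to extract two columns and two rows that inevitably produce an alternating rectangle. I expect the main obstacle to lie precisely in this concluding accounting: the factor $\lfloor r/4 \rfloor$ in the statement appears to arise from a balanced split of the $r$ colour classes of $h_{j_0 j}$ against the $r$ rows of $T_0$ (two independent halvings giving $\tfrac{1}{4}$), and making this balance tight enough to recover the optimal saving $\lfloor r/4 \rfloor \cdot r^{\binom{r}{2}}$, rather than a weaker $o(r \cdot r^{\binom{r}{2}})$ saving, will require a careful choice of $T_0$ (or an averaging argument over all such $T_0$) together with delicate bookkeeping of the admissible vertical star-patterns at each outside row.
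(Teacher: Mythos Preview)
Your proposal has a genuine gap: the proof is not finished. The first two paragraphs set up a reasonable pigeonhole framework, but the decisive third paragraph is only a plan (``I would sum these forced disagreements \ldots\ combining with further pigeonholes''), and you yourself flag that ``the main obstacle lies precisely in this concluding accounting''. As written, nothing forces the contradiction; the forced vertical disagreements you record at outside rows are single-edge constraints, and it is not at all clear how summing them over $C'$ and over rows produces two columns agreeing on an $(r+1)$-set, or any other structure that yields an alternating rectangle. A completed argument would have to specify exactly which double-count is being performed and why the numbers close.

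Your guess about the origin of the factor $\lfloor r/4\rfloor$ is also off, and this matters because it indicates the approach is aimed at the wrong target. In the paper the $\lfloor r/4\rfloor$ does not come from ``two independent halvings''. The paper works with $(r-1)$-stabilised colourings and, for each column $j\ge r$, extracts a large $\{c_1,\dots,c_{r-1}\}$-independent set $F_j\subset[m+1]$; one then shows that the family $\{F_j\}$ is $\mathcal L$-intersecting with $\mathcal L=\{r-\lfloor r/4\rfloor+1,\dots,r\}$, and applies the non-uniform Ray--Chaudhuri--Wilson (Frankl--Wilson) bound $|\mathcal F|\le\sum_{i\le \ell}\binom{m}{i}$ with $\ell=|\mathcal L|=\lfloor r/4\rfloor$. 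The value $r/4$ is exactly the threshold at which $\sum_{i\le\ell}\binom{m}{i}$ (with $m\approx r^{2r-1}$) drops below $n=\tfrac12 r^{\binom{r+1}{2}}$. None of this extremal set-theoretic machinery is present in your outline, and without a replacement for it I do not see how your direct pigeonhole route can recover the precise saving $\lfloor r/4\rfloor\cdot r^{\binom r2}$ rather than something weaker.
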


We will also prove the following result about a more off-diagonal case.

\begin{thm} \label{thm:offdiag}
	Let $ r \geq 2 $,  $m =  r^{\binom{r+1}{2}} - r^{\binom{r}{2}} +1$ and $ n= r^{r-1}(r^r - 1) + r + 1$. Then $ g(m,n) \geq r+1 $.
\end{thm}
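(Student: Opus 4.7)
I proceed by contradiction: suppose $\chi$ is an $r$-edge-colouring of $\Gamma_{m,n}$ with no alternating rectangle. Write $\rho_i \colon E(K_n) \to [r]$ for the horizontal colouring of row $i$ and $\kappa_j \colon E(K_m) \to [r]$ for the vertical colouring of column $j$. For each $r$-subset $J \subseteq [n]$ and each colouring $\tau$ of $K_J$, set
\[
  N_\tau(J) := |\{i \in [m] : \rho_i|_J = \tau\}|.
\]
The goal is to derive $m \leq r^{\binom{r+1}{2}} - r^{\binom{r}{2}}$, contradicting the hypothesised value of $m$.

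First, the Shelah-style pigeonhole gives $N_\tau(J) \leq r^r$ for every pair $(J,\tau)$: if two distinct rows shared their restriction to some $(r+1)$-subset $J \cup \{j^*\}$, then the $r+1$ columns indexed by $J \cup \{j^*\}$ would have to paint the single vertical edge between those rows with $r+1$ pairwise distinct colours, which is impossible. Hence within a $\tau$-class of $J$ the extension map $i \mapsto (\rho_i(jj^*))_{j \in J}$ is injective into $[r]^r$, proving the bound, and summing over $\tau$ recovers Shelah's $m \leq r^{\binom{r+1}{2}}$.

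The desired improvement by $r^{\binom{r}{2}}$ would follow from exhibiting a single $r$-subset $J^\star \subseteq [n]$ with $N_\tau(J^\star) \leq r^r - 1$ for \emph{every} $\tau$. It therefore suffices to rule out the \emph{saturated} scenario in which every $r$-subset $J$ admits some $\tau_J$ with $N_{\tau_J}(J) = r^r$. In this scenario the class $R_J := \{i : \rho_i|_J = \tau_J\}$ has size exactly $r^r$ and exhibits two rigidities: (a) for every $j^* \notin J$ the extension map is actually a \emph{bijection} $R_J \to [r]^r$, and (b) for every pair $i \neq i'$ in $R_J$ the tuple $(\kappa_j(ii'))_{j \in J}$ is a permutation of $[r]$, for otherwise two columns of $J$ would colour the vertical edge $\{i,i'\}$ identically while the rows agree on every horizontal edge of $K_J$, creating an alternating rectangle.

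The final step is a double counting that exploits these rigidities. Fix an $(r-1)$-subset $T \subseteq [n]$ and partition the $n - r + 1$ many $r$-subsets $J \supseteq T$ by the value of $\tau_J|_T$. Whenever $J = T \cup \{j_0\}$ and $J' = T \cup \{j^*\}$ lie in the same class, one has $|R_J \cap R_{J'}| = r$: at least $r$ by rigidity~(a) (the $j^*$-extension restricted to the $T$-coordinates hits each element of $[r]^{r-1}$ exactly $r$ times), and at most $r$ because two rows in the intersection would agree on all edges of $K_{T \cup \{j_0, j^*\}}$ except possibly $\{j_0, j^*\}$, and the basic Shelah bound on this $(r+1)$-set forces them to differ pairwise on that remaining edge. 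Combining these intersection statistics with the pigeonhole bound $|\{i : \rho_i|_T = \tau^T\}| \leq r^{2r-1}$ for each base profile $\tau^T$, and summing over the $r^{\binom{r-1}{2}}$ possible $\tau^T$, should yield $n \leq r^{r-1}(r^r - 1) + r$, contradicting the hypothesised $n$. The main obstacle is making this counting sharp: a straightforward Fisher/linear-algebra argument on the $R_J$ per fixed $\tau^T$ (via the positive-definite Gram matrix with $r^r$ on the diagonal and $r$ off-diagonal) only reproduces Shelah's threshold, so one must exploit the full intersection pattern, the Latin-square-like constraint~(b), and perhaps the interaction between different base sets $T$ in order to extract the precise constants appearing in the claimed value of $n$.
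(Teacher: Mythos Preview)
Your proposal is explicitly incomplete: you correctly set up the dichotomy (either some $r$-set $J^\star$ has every row-class of size at most $r^r-1$, giving the bound on $m$, or every $r$-set is saturated) and you correctly establish that in the saturated case the classes $R_J,R_{J'}$ with $J,J'\supseteq T$ and $\tau_J|_T=\tau_{J'}|_T$ intersect in exactly $r$. But you do not close the counting. Applying Fisher within each $\tau^T$-class and summing gives only
\[
n-r+1\;\leq\;\sum_{\tau^T}\bigl|\{i:\rho_i|_T=\tau^T\}\bigr|\;=\;m,
\]
which is vacuous for $r\geq 3$ since $m\approx r^{\binom{r+1}{2}}\gg n$; and the suggested repairs (exploit rigidity~(b), vary the base set $T$) are not turned into an actual argument. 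So as written there is a genuine gap precisely where the improvement over Shelah must occur.

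The paper's route is different and sidesteps this obstacle. Rather than working directly in $[m]$, it first shrinks the row-universe via an iterative \emph{stabilisation} lemma: call a vertical colouring $k$-stabilised if columns $1,\ldots,k$ are monochromatic in colours $c_1,\ldots,c_k$, and prove that $g_{k+1}(m'+1,n)\geq r+1$ implies $g_k(m'r^k+1,n)\geq r+1$. The proof of this step uses that the overlap graph $\mathcal G(\chi_i,\chi_j)=\{e:\chi_i(e)=\chi_j(e)\}$ is $r$-\emph{colourable} (not merely $K_{r+1}$-free) for any two columns of a good colouring --- a structural fact about the vertical colourings that your horizontal-pattern framework never invokes. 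Iterating from $k=1$ up to $k=r-1$ reduces the row-universe from $r^{\binom{r+1}{2}}-r^{\binom{r}{2}}+1$ down to $r^{r-1}(r^r-1)+1$. Only \emph{then} is Fisher applied: in the $(r-1)$-stabilised colouring each column $j\geq r$ contains a $\{c_1,\ldots,c_{r-1}\}$-independent set $F_j$ of size at least $r^r$, one checks $|F_i\cap F_j|=r$ for $i\neq j$, and Fisher gives $n-r+1\leq r^{r-1}(r^r-1)+1$. The whole point is that Fisher is invoked in the small reduced universe; in your approach the universe is still the full $[m]$, and that is exactly why your numbers do not close.
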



\section{Basic Concepts}
We begin with an easy but helpful observation about the inverse relation of $G$ and $g$. 

\begin{obs}
	$ G $ and $ g $ are non-decreasing in all parameters and we have for all $ n,r \geq 1 $
	\begin{enumerate}[(i)]
		\item $ G\left(g(n,n)\right) \geq n + 1 $ and $ G\left(g(n,n)-1\right) \leq n $.
		\item $ g\left(G(r), G(r) \right) \geq r + 1 $ and $ g\left(G(r)-1, G(r)-1\right) \leq r $.
	\end{enumerate}
\end{obs}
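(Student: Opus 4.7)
The plan is to unfold the definitions of $G$ and $g$ into their existential/universal forms, after which all four inequalities become near-tautologies. Concretely, I will use the following rewritings: $G(r) \geq n+1$ iff there exists an alternating-rectangle-free $r$-colouring of $E(\Gamma_{n,n})$; equivalently, $G(r) \leq n$ iff every $r$-colouring of $E(\Gamma_{n,n})$ contains an alternating rectangle. Similarly, $g(m,n) \geq r+1$ iff there exists an alternating-rectangle-free $r$-colouring of $E(\Gamma_{m,n})$, and $g(m,n) \leq r$ iff every $r$-colouring of $E(\Gamma_{m,n})$ contains an alternating rectangle.

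First I would handle monotonicity. For $G$, observe that any $r$-colouring is also an $r'$-colouring whenever $r' \geq r$, so if every $r$-colouring of $E(\Gamma_{n,n})$ induces an alternating rectangle then so does every $r'$-colouring; hence $G(r') \leq G(r)$ is replaced by the correct direction $G(r) \leq G(r')$ via the contrapositive. For $g$, if $m \leq m'$ and $n \leq n'$, any $r$-colouring of $E(\Gamma_{m',n'})$ restricts to one of $E(\Gamma_{m,n})$, preserving the alternating-rectangle-free property; thus $g(m,n) \leq g(m',n')$.

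For part (i), write $r := g(n,n)$. By definition of $g$ as a minimum, there is an alternating-rectangle-free $r$-colouring of $E(\Gamma_{n,n})$, which by the rewriting above gives $G(r) \geq n+1$, i.e.\ $G(g(n,n)) \geq n+1$. Conversely, by minimality there is no alternating-rectangle-free $(r-1)$-colouring of $E(\Gamma_{n,n})$, so every $(r-1)$-colouring contains an alternating rectangle and hence $G(r-1) \leq n$, i.e.\ $G(g(n,n)-1) \leq n$. Part (ii) is the mirror image: setting $n := G(r)$, the definition of $G$ gives that every $r$-colouring of $E(\Gamma_{n,n})$ has an alternating rectangle, so $g(n,n) \geq r+1$; and minimality of $G(r)$ provides an alternating-rectangle-free $r$-colouring of $E(\Gamma_{n-1,n-1})$, yielding $g(n-1,n-1) \leq r$.

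There is really no obstacle here; the whole content is bookkeeping about whether a minimum is attained as the smallest value for which some property holds or the largest value for which its negation holds. The only small care required is to consistently translate ``smallest $n$ such that every colouring\dots'' into ``for $n-1$ there exists a colouring without\dots'' and vice versa, so as not to shift the index by one in the wrong direction.
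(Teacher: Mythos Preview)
The paper states this observation without proof, so there is nothing to compare your approach against; unfolding the definitions is indeed all that is needed, and your arguments for (i) and (ii) are correct.

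That said, two parts of your write-up are wrong as written. First, your rewriting of $g$ is inverted: by the paper's definition, $g(m,n)$ is the smallest $r$ for which an alternating-rectangle-free $r$-colouring \emph{exists}, so $g(m,n)\leq r$ is equivalent to the existence of such a colouring, and $g(m,n)\geq r+1$ is equivalent to every $r$-colouring containing an alternating rectangle --- exactly the opposite of what you state in your first paragraph. Fortunately, when you actually argue (i) and (ii) you silently use the correct versions, so the proofs themselves survive.

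Second, your monotonicity argument for $G$ is not right. From ``every $r$-colouring is an $r'$-colouring for $r'\geq r$'' you cannot conclude ``if every $r$-colouring has an alternating rectangle then so does every $r'$-colouring''; that implication is simply false, and taking a contrapositive does not repair it. The clean argument goes the other way: a good $r$-colouring of $\Gamma_{n,n}$ is also a good $r'$-colouring, so $G(r)\geq n+1$ implies $G(r')\geq n+1$, whence $G(r)\leq G(r')$. Equivalently, if every $r'$-colouring of $\Gamma_{n,n}$ has an alternating rectangle then in particular every $r$-colouring does, giving $G(r')\leq n \Rightarrow G(r)\leq n$.
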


It turns out that it is enough to consider only vertical edges. To understand this, we need the following definition.
Fix positive integers $ m,n,r $ and colours $ c_1, \ldots, c_r $ for the rest of this section.

\begin{definition}
  Let $ \chi $ be an $ r $-colouring of the vertical edges of $\Gamma_{m,n} $. For every $ i =1, \ldots, n $, define $ \chi_i $ to be the colouring of $ E(K_m) $ induced by the $ i $-th column of $\Gamma_{m,n}$.
  Furthermore, define a graph $ \mathcal G(\chi_i,\chi_j) $ on $ [m] $ with edges $ \{ e \in E(K_n): \chi_i(e) = \chi_j(e) \} $ for every $ 1 \leq i < j \leq n $.
\end{definition}

The following observation first appeared in \cite{CFLS_GridRamsey}, where it played a crucial role in their construction of a ``good'' edge-colouring of a super-polynomially sized grid graph.

\begin{obs}\label{obs:ext}
	An $r$-colouring $ \chi $ of all vertical edges of $ \Gamma_{m,n} $ is extendible to an $r$-colouring of all edges without alternating rectangles if and only if $ \mathcal G(\chi_i,\chi_j) $ is $ r $-colourable for every $ 1 \leq i < j \leq n $.
\end{obs}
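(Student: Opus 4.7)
The plan is to prove both implications essentially directly, by translating the non-existence of an alternating rectangle into a coloring statement one pair of columns at a time. The key point is that, once the vertical colouring $\chi$ is fixed, any rectangle with column-coordinates $i<j$ and row-coordinates $k<k'$ has its two vertical edges receive the same colour \emph{if and only if} $\{k,k'\}$ is an edge of $\mathcal{G}(\chi_i,\chi_j)$. Hence whether this rectangle is alternating depends on the horizontal edges only through the pair of colours assigned to the two horizontal edges $\{(k,i),(k,j)\}$ and $\{(k',i),(k',j)\}$.

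For the forward direction, I would fix an extension $\chi^\ast$ of $\chi$ with no alternating rectangle, fix a pair of columns $i<j$, and define $\phi_{ij}\colon [m]\to[r]$ by letting $\phi_{ij}(k)$ be the colour $\chi^\ast$ assigns to the horizontal edge $\{(k,i),(k,j)\}$. If $\{k,k'\}$ were an edge of $\mathcal{G}(\chi_i,\chi_j)$ with $\phi_{ij}(k)=\phi_{ij}(k')$, then the rectangle on rows $k,k'$ and columns $i,j$ would have both pairs of parallel edges monochromatic, a contradiction. Hence $\phi_{ij}$ is a proper $r$-colouring of $\mathcal{G}(\chi_i,\chi_j)$, which is therefore $r$-colourable.

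For the backward direction, for each pair of columns $i<j$ I would pick an arbitrary proper $r$-colouring $\phi_{ij}\colon [m]\to [r]$ of $\mathcal{G}(\chi_i,\chi_j)$, and then extend $\chi$ to an $r$-colouring $\chi^\ast$ by setting the colour of the horizontal edge $\{(k,i),(k,j)\}$ to be $\phi_{ij}(k)$ for every $k\in[m]$ and every $1\le i<j\le n$. Any rectangle on rows $k<k'$ and columns $i<j$ has its two vertical edges of equal colour exactly when $\{k,k'\}\in E(\mathcal{G}(\chi_i,\chi_j))$, in which case $\phi_{ij}(k)\neq\phi_{ij}(k')$ by properness, so the two horizontal edges receive distinct colours and the rectangle is not alternating.

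There is no real obstacle here: the statement is essentially a reformulation, and the only mild subtlety is being careful that the assignment of horizontal colours in the backward direction is well-defined (each horizontal edge belongs to a unique pair of columns, so the values $\phi_{ij}(k)$ chosen for different pairs $(i,j)$ never conflict). I would therefore keep the write-up short and emphasise the translation between alternating rectangles on columns $i,j$ and monochromatic pairs under $\phi_{ij}$ inside $\mathcal{G}(\chi_i,\chi_j)$.
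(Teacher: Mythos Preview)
Your argument is correct and is exactly the same approach as the paper's: both observe that, for each fixed pair of columns $i<j$, colouring the horizontal edges between them without creating an alternating rectangle is precisely equivalent to properly $r$-colouring $\mathcal{G}(\chi_i,\chi_j)$, via the map $k\mapsto \chi^\ast(\{(k,i),(k,j)\})$. The paper states this equivalence in one sentence, while you have unpacked both implications in detail, but there is no difference in content.
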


\begin{proof}
	Let $ \chi $ be a colouring  of all vertical edges of $ \Gamma_{m,n} $. Fix $ 1 \leq i < j \leq n $ and note that it is possible to colour the $ m $ horizontal edges between columns $ i $ and $ j $ without creating an alternating rectangle if and only if the graph $ \mathcal G(\chi_i,\chi_j)$ is $ r $-colourable.
\end{proof}

We will only work with colourings of the vertical edges from now on and see them as vectors $ \chi = (\chi_1, \chi_2, \ldots, \chi_n) $ of $ r $-colourings of $ E(K_m) $. For convenience, let $ C_r(m,n) $ be the set of such colourings and call $ \chi \in C_r(m,n) $ \emph{good} if $ \mathcal G(\chi_i,\chi_j) $ is $ r $-colourable for every $ 1 \leq i < j \leq m $, that is if we can extend it to a colouring of $ E(\Gamma_{m,n}) $ without alternating rectangles.

In their proofs, both Shelah \cite{Shelah_HJ} and Gy\'arf\'as \cite{gyarfas} only used the much weaker condition than guaranteed by \cref{obs:ext} that $ \mathcal G(\chi_i,\chi_j) $ must be $ K_{r+1} $-free in a good colouring $\chi $. We will use $ \chi(\mathcal G(\chi_i,\chi_j)) \leq r $ in a more crucial way in order to make an improvement to the upper bound.

\begin{definition}
	A colouring $ \chi' \in C_r(m,n) $ is obtained from $ \chi \in C_r(m,n) $ by \emph{switching} two colours $ c $ and $ \tilde c $ at an edge $ e $ if, for every $ i= 1, \ldots, n $, we have $ \chi'_i(e) = \tilde c $ whenever $ \chi_i(e) =  c$, $ \chi'_i(e) = c $ whenever $ \chi_i(e) =  \tilde c$ and $ \chi = \chi' $ everywhere else. For $ \chi,\chi' \in C_r(m,n) $, we then say that $ \chi \sim \chi' $ if $ \chi' $ can be obtained from $ \chi $ by finitely many switches.
\end{definition}

It is easy to see that $ \chi $ is good if and only if $ \chi' $ is good whenever $ \chi \sim \chi'$.

\begin{definition}
	Let $ k \in [r] $. We call a colouring $ \chi \in C_r(m,n) $ \emph{$ k $-stabilised} if $ \chi_i \equiv c_i $ for all $ i = 1, \ldots, k $. Furthermore, define $ g_k(r,n) $ to be the minimum number $ r $, such that there is a good, $ k $-stabilised colouring $ \chi \in C_r(m,n) $.
\end{definition}
Observe that we can always stabilise a colouring for the first colour: Given a good colouring $ \chi \in C_r(m,n) $ we obtain a good $ 1 $-stabilised colouring $ \chi'\in C_r(m,n) $ by switching $ c_1 $ and $ \chi_1(e) $ at $ e $ for every $ e \in E(K_m) $. This implies the following observation.

\begin{obs}\label{obs:stab1}
	For every $ m,n \geq 1 $, we have $ g(m,n) = g_1(m,n)$.
\end{obs}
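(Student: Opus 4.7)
The plan is to prove the equality $g(m,n) = g_1(m,n)$ by showing both inequalities, with the nontrivial direction being essentially already sketched in the paragraph preceding the observation.

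For the inequality $g(m,n) \le g_1(m,n)$, I would just note that every $1$-stabilised colouring is in particular a colouring, so the class over which the minimum in the definition of $g_1(m,n)$ is taken is a subset of the class used to define $g(m,n)$. Hence if there exists a good $1$-stabilised $r$-colouring of the vertical edges of $\Gamma_{m,n}$ (which, by \cref{obs:ext}, extends to an $r$-colouring of $E(\Gamma_{m,n})$ without alternating rectangles), then in particular there exists a good $r$-colouring, so $g(m,n)\le r$. Taking $r = g_1(m,n)$ gives the desired inequality.

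For the reverse inequality $g_1(m,n) \le g(m,n)$, set $r := g(m,n)$ and let $\chi \in C_r(m,n)$ be a good colouring of the vertical edges, which exists by definition of $g$ together with \cref{obs:ext}. The plan is to transform $\chi$ into a good $1$-stabilised colouring $\chi'$ using only switches, which we are allowed to do since switches preserve goodness (as already remarked in the paper). Concretely, for each edge $e \in E(K_m)$ define $\chi'$ by switching the colours $c_1$ and $\chi_1(e)$ at $e$; after processing every edge, the first column satisfies $\chi'_1(e) = c_1$ for all $e$, so $\chi' \equiv c_1$ on column $1$ and is thus $1$-stabilised. Since $\chi' \sim \chi$ and $\chi$ is good, $\chi'$ is good, showing that a good $1$-stabilised $r$-colouring exists, hence $g_1(m,n) \le r = g(m,n)$.

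No step here is really an obstacle: the observation is essentially a bookkeeping consequence of two facts already established in the text, namely that switching preserves goodness and that one can perform the above sequence of switches simultaneously at every edge of $E(K_m)$ without interfering with each other (since each switch at a single edge $e$ affects only the column coordinates at $e$, and the switches at different edges commute). The only thing to double-check is that switching at different edges truly commutes and that no later switch undoes an earlier one, which is immediate since each switch only permutes the two colours $c_1$ and $\chi_1(e)$ at the single edge $e$.
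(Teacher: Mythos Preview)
Your proposal is correct and follows exactly the approach sketched in the paper: the trivial inequality $g(m,n)\le g_1(m,n)$ comes from the inclusion of $1$-stabilised colourings among all colourings, and the reverse inequality is obtained by the very switching argument the paper gives in the paragraph preceding the observation. Your added remarks about commutativity of switches at different edges are fine but not strictly necessary, since each switch acts only on a single edge and so independence is immediate.
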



\section{The Proof}

For every positive integer $ r $, fix colours $ c_1, \ldots, c_r $ for the rest of this section. The main part of the proof is the following recursive bound for $g_k$.

\begin{prop}\label{prop:gk}
	For all $ m,n \in \N $ and every $ r < n $ we have
	\begin{enumerate}[(i)]
		\item $ g_{k+1}(m+1,n) \geq r+1 \implies g_{k}(mr^k+1,n) \geq r+1 $ for every $ k \leq r -1 $ and
		\item $ g_{r}(r^r+1,n) \geq r+1$.		
	\end{enumerate}
\end{prop}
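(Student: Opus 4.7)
My plan is to prove both parts by contrapositive, using in each case a product-labelling argument driven by the $r$-colourability of the graphs $\mathcal{G}(\chi_i, \chi_{k+1})$ for $i \leq k$, which is the only consequence of goodness that directly relates $\chi_{k+1}$ to the already stabilised columns.

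\textbf{Part (ii).} Suppose for contradiction that there is a good $r$-stabilised $\chi \in C_r(r^r+1, n)$; since $r < n$, the column $\chi_{r+1}$ exists. Because $\chi_i \equiv c_i$ for each $i \leq r$, the graph $\mathcal{G}(\chi_i, \chi_{r+1})$ is exactly the $c_i$-colour class of $\chi_{r+1}$, so goodness forces every one of the $r$ colour classes of $\chi_{r+1}$ to be $r$-colourable. Fix a proper $r$-colouring $f_i \colon [r^r+1] \to [r]$ of the $c_i$-class and form the product label $\Lambda(v) := (f_1(v), \ldots, f_r(v)) \in [r]^r$. There are $r^r+1$ vertices and only $r^r$ labels, so two distinct $u, v$ share a label; but $\{u,v\}$ has some $\chi_{r+1}$-colour $c_i$, whence $f_i(u) = f_i(v)$ contradicts properness of $f_i$ on its class.

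\textbf{Part (i).} I want to promote a good $k$-stabilised colouring $\chi \in C_r(mr^k+1, n)$ into a good $(k+1)$-stabilised colouring in $C_r(m+1, n)$. First, repeat the product-labelling argument using only the first $k$ colours: choose proper $r$-colourings $f_1, \ldots, f_k$ of the $c_i$-colour classes of $\chi_{k+1}$ and set $\Lambda(v) := (f_1(v), \ldots, f_k(v)) \in [r]^k$. By averaging, some fibre $S$ of $\Lambda$ has size at least $\lceil (mr^k+1)/r^k \rceil = m+1$; fix one of size exactly $m+1$. As in part (ii), any edge in $E(K_S)$ coloured $c_i$ by $\chi_{k+1}$ with $i \leq k$ would contradict properness of $f_i$, so $\chi_{k+1}|_S$ takes values only in $\{c_{k+1}, \ldots, c_r\}$. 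Second, I perform switches to close the gap between ``avoids $c_1, \ldots, c_k$'' and ``equals $c_{k+1}$'': for every edge $e \in E(K_S)$ with $\chi_{k+1}(e) = c \neq c_{k+1}$, switch the pair $\{c, c_{k+1}\}$ at $e$. Each such switch preserves goodness, and since both swapped colours $c, c_{k+1}$ lie outside $\{c_1, \ldots, c_k\}$, the columns $\chi_1, \ldots, \chi_k$ are unchanged at every edge of $K_S$. Restricting the resulting colouring to $S$ therefore yields a good $(k+1)$-stabilised element of $C_r(m+1, n)$, giving $g_{k+1}(m+1,n) \leq r$ and completing the contrapositive.

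\textbf{Main hurdle.} The only non-routine move is the interplay between the two phases. The labelling argument alone cannot make $\chi_{k+1}|_S$ constant — it only restricts its palette to $\{c_{k+1}, \ldots, c_r\}$ — and a naive attempt to merely identify these colours would generally destroy the $r$-colourability of the relevant $\mathcal{G}$-graphs. The point is that this remaining freedom can be absorbed by switches, which by construction preserve every $\mathcal{G}(\chi_i, \chi_j)$; and the palette restriction is exactly the guarantee needed so that the switches do not disturb the already-stabilised columns.
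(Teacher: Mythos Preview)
Your proof is correct and follows essentially the same approach as the paper's: your product label $\Lambda = (f_1, \ldots, f_k)$ is precisely the common refinement of the partitions into $c_i$-independent sets that the paper uses, and the subsequent pigeonhole and switching steps are identical. The only cosmetic difference is that you handle part~(ii) by a direct contradiction on the labels, whereas the paper simply remarks that the part~(i) argument with $k=r$ forces every refined part to have size at most~$1$.
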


Given a graph $G$, an r-colouring $\chi$ of $ E(G) $ and a set of colours $ C $, we call a set $I \subset V(G)$ \emph{$C$-independent} if $ \chi (e) \not \in C $ for all $e \in E(I)$.

\begin{proof}
	We prove the contrapositive of $(i)$. Assume $ g_k(mr^k +1,n) \leq r$, that is there exists a good colouring $ \chi \in C_r(mr^k+1,n) $ which is $k$-stabilised. For $ 1 \leq i \leq k $, let $ G_i $ be the subgraph of $ K_{mr^k+1} $ obtained by restricting to all edges $ e \in E(K_m) $ with $ \chi_{k+1}(e) = c_i $. Since $ \chi $ is $k$-stabilised, we have $ G_i = \mathcal G(\chi_i,\chi_{k+1}) $ and deduce from \cref{obs:ext} that $ G_i $ is $ r $-colourable for all $ i=1, \ldots, k $. Therefore, we find for every $ i = 1, \ldots, k $ a partition $ \cI_i$ of $[mr^k+1]$ into at most $r$ $c_i$-independent sets with respect to the colouring $\chi_{k+1} $. Let \[ \cP := \bigwedge_{i=1}^{k} \cI_i := \left\{ I_1 \cap \ldots \cap I_k : I_i \in \cI_i \text{ for all }  i \in [k] \right\} \] be their common refinement. $ \cP$ partitions $[mr^k+1] $ into at most $r^k$ $\{c_1, \ldots, c_k \}$-independent sets with respect to $ \chi _{k+1}$. Thus, by the pigeonhole principle, there is some $ X \in \cP $ with $ |X| \geq m +1 $. Now obtain $ \chi' $ from $ \chi $ by switching $ c_{k+1} $ and $ \chi(e) $ at $ e $ for every $ e \in \binom{X}{2} $ and note that this does not change any previously fixed colours. Restricting $ \chi' $ to $ X $ (and relabelling) gives a good $ (k+1) $-stabilised colouring and hence $ g_{k+1}(m+1,n) \leq r $, finishing the proof.
	
	The proof of $(ii)$ is very similar. In fact, the above proof works for $k = r$, noting that every $\{c_1, \ldots, c_r\}$-independent set must be of size at most $1$. 
\end{proof}

By iterating \cref{prop:gk} we easily deduce
\begin{align*}
g_{r}(r^r+1,r+1) \geq r+1 &\implies  g_{r-1}(r^r\cdot r^{r-1}+1,r+1) \geq r+1 \\ &\implies \ldots \implies g_{1}(r^{\binom{r+1}{2}}+1,r+1) \geq r+1,
\end{align*}
hence obtaining precisely Shelah's original bound. To prove \cref{thm:diag} and \cref{thm:offdiag} we make an improvement in the very first step of the iteration using a similar trick as Gy\'arf\'as in \cite{gyarfas}.

\begin{prop}\label{prop:offdiag}
	Let $ r \in \N$, $ m = r^{r-1}(r^r-1) $ and $n=r^{r-1}(r^r-1)+r+1$. Then $ g_{r-1}(m+1,n) \geq r+1 $.
\end{prop}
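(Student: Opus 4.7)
The plan is to argue by contradiction, refining the template of the proof of \cref{prop:gk}. Suppose $g_{r-1}(m+1, n) \leq r$, so there is a good $(r-1)$-stabilised colouring $\chi \in C_r(m+1, n)$. For each free column $j \in \{r, \ldots, n\}$, I will extract a large subset $X_j \subseteq [m+1]$ on which $\chi_j$ is monochromatic in $c_r$, and then exploit both the sizes $|X_j|$ and the pairwise intersections $|X_j \cap X_{j'}|$ via Cauchy--Schwarz.

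First, following the proof of \cref{prop:gk}, for each $j \geq r$ I form the common refinement $\cP_j$ of the $r$-chromatic partitions of $[m+1]$ associated with the graphs $\mathcal{G}(\chi_i, \chi_j) = \chi_j^{-1}(c_i)$ ($i = 1, \ldots, r-1$). This yields a partition of $[m+1]$ into at most $r^{r-1}$ parts, each $\{c_1, \ldots, c_{r-1}\}$-independent with respect to $\chi_j$. Letting $X_j$ denote a largest part, pigeonhole gives $|X_j| \geq \lceil (m+1)/r^{r-1} \rceil = r^r$, and $\chi_j$ is monochromatic in $c_r$ on $X_j$.

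Second, I claim $|X_j| = r^r$ and $|X_j \cap X_{j'}| \leq r$ for all distinct $j, j' \in \{r, \ldots, n\}$. If some $|X_j| \geq r^r + 1$, restricting $\chi$ to $X_j$ and placing column $j$ in the $r$-th position yields a good $r$-stabilised colouring in $C_r(r^r + 1, n)$, contradicting \cref{prop:gk}(ii) since $n > r$. For the intersection bound, note that on $X_j \cap X_{j'}$ both $\chi_j$ and $\chi_{j'}$ are $c_r$-monochromatic, so $\mathcal{G}(\chi_j, \chi_{j'})$ contains a clique of order $|X_j \cap X_{j'}|$; by $r$-colourability this forces $|X_j \cap X_{j'}| \leq r$.

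Finally, setting $d_v = |\{j \in \{r, \ldots, n\} : v \in X_j\}|$, double-counting gives $\sum_v d_v = r^r(n-r+1)$ and $\sum_v \binom{d_v}{2} \leq r \binom{n-r+1}{2}$. Combined with the Cauchy--Schwarz bound $(\sum_v d_v)^2 \leq (m+1) \sum_v d_v^2$, a short calculation using $m = r^{r-1}(r^r - 1)$ and $n - r + 1 = r^{r-1}(r^r - 1) + 2$ reduces the chain of inequalities to $r^{r-1} \leq 1$, a contradiction for $r \geq 2$. The main obstacle is this final counting step: the particular values of $m$ and $n$ in the statement are calibrated precisely so that Cauchy--Schwarz just barely fails.
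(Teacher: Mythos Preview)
Your argument is correct, but the finishing mechanism differs from the paper's. Both proofs produce, for each column $j\ge r$, a $\{c_1,\dots,c_{r-1}\}$-independent block $X_j$ (your notation) of size at least $r^r$, and both use that any two such blocks meet in at most $r$ vertices. From here the paper squeezes the inequality chain $r^r\le |F_i|=\sum_{I\in\cP_j}|F_i\cap I|\le |\cP_j|\,r\le r^r$ to force \emph{exact} pairwise intersections $|F_i\cap F_j|=r$, and then invokes Fisher's inequality to get $n-r+1\le m+1$. You instead pin down $|X_j|=r^r$ via \cref{prop:gk}(ii), keep only the upper bound $|X_j\cap X_{j'}|\le r$, and close with a double count plus Cauchy--Schwarz; with $m=r^{r-1}(r^r-1)$ the identity $r^r(r^r-1)=r\,m$ makes the inequality collapse to $r^{r-1}\le 1$, exactly as you say.

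What each buys: your route is more elementary (no Fisher), and the appeal to \cref{prop:gk}(ii) is a nice shortcut for the exact size. The paper's route extracts a stronger structural fact (an exactly $r$-intersecting family) and then applies an off-the-shelf set-system bound; this dovetails with the proof of \cref{prop:diag}, where the same template yields an $\cL$-intersecting family and Fisher is replaced by the non-uniform Ray-Chaudhuri--Wilson theorem. One small cosmetic point: when you ``restrict $\chi$ to $X_j$'' to invoke \cref{prop:gk}(ii), say explicitly that you pass to any $r^r+1$ rows inside $X_j$ so that the restricted colouring lies in $C_r(r^r+1,n)$.
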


\begin{proof}
	Assume for contradiction that there is a good, $ (r-1) $-stabilised colouring $ \chi \in C_r(m+1,n) $ and fix a column index $ r \leq j \leq n $. Proceeding as in the proof of \cref{prop:gk}, we find a partition $ \cP_j $ of $ [m+1] $ into at most $ r^{r-1} $ $\{c_1, \ldots, c_{r-1} \}$-independent sets (in $\chi_j)$. By the pigeonhole-principle, there is some $ F_j \in \cP_j $ of size at least $ r^r $.
	
	We show that $ \{F_r, \ldots, F_n \} $ is $ r $-intersecting, i.e.\ $ |F_i \cap F_j| = r $ for every $ r \leq i < j \leq n $. Fix some $ r \leq i < j \leq n $. We have $ | I_1 \cap I_2 | \leq r $ for every $ I_1 \in \cP_i $ and every $ I_2 \in \cP_j$, because $ I_1 \cap I_2 $ induces a $ c_r $-monochromatic clique in $ G(\chi_i,\chi_j) $. Moreover, we have $ F_i = \bigcup_{I \in \mathcal P_j} F_i \cap I $ and hence \[ r^r \leq |F_i| = \sum_{I \in \mathcal P_j} |F_i \cap I| \leq |\mathcal P_j| r \leq r^r,\] forcing equality everywhere. In particular $ |F_i \cap F_j| = r $. By Fisher's inequality such a family has size at most the number of elements in the ground set, that is $ n-r+1 \leq m + 1$ and hence $ n \leq m + r $, a contradiction.
\end{proof}

Starting the iteration with \cref{prop:offdiag}, we deduce \cref{thm:offdiag}. This already gives a new upper bound to $ G(r) $, but the following proposition leads to a stronger diagonal result.

\begin{prop}\label{prop:diag}
	Let $ r \in \N$, $ m = r^{r-1}(r^r-\lfloor r/4 \rfloor) $ and $n=\frac12 r^{\binom{r+1}{2}}$. Then $ g_{r-1}(m+1,n) \geq r+1 $.
\end{prop}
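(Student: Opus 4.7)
The plan is to mirror Proposition~\ref{prop:offdiag}, pushing the counting further to compensate for losing the exact equality $|F_j|=r^r$ that drove that proof. Assume for contradiction that a good $(r-1)$-stabilised colouring $\chi\in C_r(m+1,n)$ exists. For each $j\in\{r,\ldots,n\}$ construct the partition $\cP_j$ of $[m+1]$ exactly as in the proof of Proposition~\ref{prop:offdiag}: at most $r^{r-1}$ parts, each a $c_r$-clique of $\chi_j$ of size at most $r^r$, and any two parts from different columns intersect in at most $r$ vertices.

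With $s:=\lfloor r/4\rfloor$ and $m+1=r^{r-1}(r^r-s)+1$, pigeonhole only yields a largest part $F_j\in\cP_j$ of size $\ge r^r-s+1$, not $=r^r$. So the Fisher step of Proposition~\ref{prop:offdiag} no longer forces $|F_i\cap F_j|=r$, and I would instead exploit a defect bookkeeping. Writing $d(F):=r^r-|F|$, the identity $|F|=\sum_{F'\in\cP_j}|F\cap F'|$ combined with each summand being at most $r$ shows that at least $r^{r-1}-d(F)$ of the parts $F'\in\cP_j$ satisfy $|F\cap F'|=r$; moreover the total defect per column $\sum_{F\in\cP_j}d(F)=sr^{r-1}-1$ is small, so by Markov at least $\tfrac12 r^{r-1}$ parts of each $\cP_j$ have defect at most~$2s$.

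With this in hand I would select from each column a collection $\cL_j\subseteq\cP_j$ of such low-defect parts and apply a Fisher/Bose-type inequality to $\bigcup_j\cL_j$: within a column the selected parts are pairwise disjoint, while across columns they $r$-intersect in almost all pairs. The goal is to derive a bound of the form $\sum_j|\cL_j|\le C(r)\cdot(m+1)$ for an explicit $C(r)=O(r)$, giving $n-r+1\le 2C(r)(m+1)/r^{r-1}$; plugged into $m+1=r^{r-1}(r^r-s)+1$, this should contradict $n=\tfrac12 r^{\binom{r+1}{2}}$, with the factor $\tfrac12$ in $n$ presumably accounting for the slack introduced by moving from an equation to an inequality on intersection sizes.

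The main obstacle is precisely this Fisher/Bose-style step: classical Fisher requires equality $|F_i\cap F_j|=\lambda$, whereas here one only has $|F_i\cap F_j|\le r$. I would try either (a) a direct Gram-matrix rank argument on the characteristic vectors $\mathbf 1_F$ for $F\in\bigcup_j\cL_j$, using the defect estimates to show the Gram matrix is non-singular, or (b) passing to the subfamily of pairs $(F,F')$ with $|F\cap F'|=r$ (abundant by the defect bookkeeping above) and applying a nonuniform Fisher inequality on a slightly enlarged auxiliary ground set. Extracting constants sharp enough to reach $\tfrac12 r^{\binom{r+1}{2}}$, rather than a weaker bound, is where the delicate part of the proof must live.
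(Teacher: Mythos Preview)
Your setup through the pigeonhole step is fine, and you correctly identify that Fisher's exact-intersection hypothesis no longer holds. But the proposed repair---defect bookkeeping, Markov, selecting many parts per column, then a Gram-matrix or ``nonuniform Fisher'' step---is both more elaborate than needed and, as you yourself flag, not actually carried out. In particular, your target bound $\sum_j|\cL_j|\le C(r)(m+1)$ with $C(r)=O(r)$ cannot come from a rank/Fisher argument: you are handling on the order of $(n-r+1)\cdot \tfrac12 r^{r-1}$ sets, vastly more than the $m+1$ ground-set elements, and linear-algebra bounds of Fisher type cap the family size at the ambient dimension.

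The paper avoids all of this with one observation you nearly make but do not exploit. Keep just \emph{one} set $F_j$ per column and note that the family $\{F_r,\ldots,F_n\}$ is $\cL$-intersecting for $\cL=\{r-s+1,\ldots,r\}$, where $s=\lfloor r/4\rfloor$. Indeed, you already have $|F_i\cap I|\le r$ for every $I\in\cP_j$; if in addition $|F_i\cap F_j|\le r-s$ then
\[
|F_i|=\sum_{I\in\cP_j}|F_i\cap I|\le (r^{r-1}-1)r+(r-s)=r^r-s,
\]
contradicting $|F_i|\ge r^r-s+1$. So every pairwise intersection lies in $\cL$, a set of size $s=\lfloor r/4\rfloor$. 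Now the non-uniform Ray-Chaudhuri--Wilson theorem (Frankl--Wilson) applies directly and gives
\[
n-r+1\le\sum_{i=0}^{s}\binom{m+1}{i},
\]
and a crude estimate of the right-hand side (of order at most $(r^{2r-1})^{r/4}$) is well below $\tfrac12 r^{\binom{r+1}{2}}$, yielding the contradiction. The key lemma you are missing is precisely Frankl--Wilson in place of Fisher: it handles families whose pairwise intersections take values in a small set, with no need for defects, Markov, or Gram matrices.
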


We will use the non-uniform Ray-Chaudhuri -- Wilson theorem (due to Frankl and Wilson \cite{FranklWilson}).

\begin{thm}[Non-uniform RW theorem]\label{thm:fw}
Let $n$ and $\ell$ be positive integers with $\ell \leq n$, $ \mathcal L \subset [n] $ be a set of size $\ell$ and $ \mathcal F \subset 2^{[n]} $ be a family of sets. If $ \mathcal F $ is $ \mathcal L $-intersecting (that is $ |F_1 \cap F_2 | \in \mathcal L $ for any two distinct $ F_1,F_2 \in \mathcal F$), then \[ | \mathcal F| \leq \sum_{i=0}^\ell \binom{n}{i} .\]
\end{thm}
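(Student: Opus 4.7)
The plan is to follow the argument of \cref{prop:offdiag}, but replace Fisher's inequality by the non-uniform Frankl--Wilson theorem (\cref{thm:fw}). Assume for contradiction that there is a good, $(r-1)$-stabilised colouring $\chi \in C_r(m+1,n)$. As in \cref{prop:offdiag}, for each $j \in \{r, \ldots, n\}$ I partition $[m+1]$ via $\cP_j$ into at most $r^{r-1}$ sets that are $\{c_1, \ldots, c_{r-1}\}$-independent with respect to $\chi_j$, and let $F_j \in \cP_j$ be a largest part. Since $m+1 = r^{r-1}(r^r - \lfloor r/4 \rfloor)+1$, pigeonhole yields $|F_j| \geq r^r - \lfloor r/4 \rfloor +1$.

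The crucial new step is to sandwich the pairwise intersections $|F_i \cap F_j|$ for $r \leq i < j \leq n$. Exactly as in \cref{prop:offdiag}, $|I_1 \cap I_2| \leq r$ for any $I_1 \in \cP_i$, $I_2 \in \cP_j$ (otherwise $\mathcal{G}(\chi_i,\chi_j)$ would contain a monochromatic $K_{r+1}$), giving the upper bound $|F_i \cap F_j| \leq r$. Summing over $I \in \cP_j$ and using $|\cP_j| \leq r^{r-1}$ together with the pigeonhole lower bound on $|F_i|$,
\[
\sum_{I \in \cP_j} \bigl(r - |F_i \cap I|\bigr) = r|\cP_j| - |F_i| \leq r^r - \bigl(r^r - \lfloor r/4 \rfloor + 1\bigr) = \lfloor r/4 \rfloor - 1.
\]
Since every summand is non-negative, each $|F_i \cap I|$, and in particular $|F_i \cap F_j|$, is at least $r - \lfloor r/4 \rfloor + 1$. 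Thus all pairwise intersections of $\cF := \{F_r, \ldots, F_n\}$ lie in $\cL := \{r-\lfloor r/4 \rfloor +1, \ldots, r\}$, a set of size $\ell := \lfloor r/4 \rfloor$.

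For $r \in \{2,3\}$ this already yields a contradiction: $\ell = 0$ makes the right-hand side of the display equal to $-1$, impossible since the left-hand side is non-negative (and $\{r, \ldots, n\}$ has at least two elements, as $n > r$). For $r \geq 4$ I apply \cref{thm:fw} to $\cF$ on ground set $[m+1]$ with the index set $\cL$ — the $F_j$ being pairwise distinct, since $F_i = F_j$ would force $|F_i \cap F_j| = |F_i| > r$ — to obtain $n - r + 1 \leq \sum_{i=0}^{\lfloor r/4 \rfloor} \binom{m+1}{i}$. The final step is to verify that this inequality fails for the stated $m, n$: using $m+1 \leq r^{2r-1}$ and $\binom{m+1}{i} \leq (m+1)^i / i!$, the right-hand side is at most of order $r^{r^2/2 - r/4}$, while $n - r + 1 \sim \tfrac12 r^{r(r+1)/2}$, leaving a margin of roughly $r^{3r/4}$. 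I expect the main obstacle to be this last numerical estimate; the asymptotic gap is enormous, but one must still carry out the arithmetic carefully for all $r \geq 4$ and handle the boundary $\ell = 0$ separately as above.
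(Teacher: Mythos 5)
Your proposal does not prove the statement it is meant to prove. The statement is \cref{thm:fw} itself --- the non-uniform Ray-Chaudhuri--Wilson theorem of Frankl and Wilson, a free-standing result in extremal set theory bounding the size of an $\cL$-intersecting family. What you have written is instead a proof of \cref{prop:diag}, the \emph{application} of that theorem in this paper, and in your first sentence you invoke \cref{thm:fw} as a known tool. As an argument for the theorem itself this is circular: you assume the bound $|\cF| \leq \sum_{i=0}^{\ell}\binom{n}{i}$ in order to derive one of its consequences, and nothing in your write-up establishes the bound. (For what it is worth, your argument for \cref{prop:diag} is essentially the paper's own --- the sandwiching of $|F_i \cap F_j|$ into $\cL = \{r - \lfloor r/4\rfloor + 1, \ldots, r\}$ via the sum over $I \in \cP_j$ is exactly the step used there --- but that is not the task at hand. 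Note also that the paper does not prove \cref{thm:fw}; it cites \cite{FranklWilson}.)

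A genuine proof requires an argument independent of the grid Ramsey setting, and the standard one is the polynomial method. Order $\cF = \{F_1, \ldots, F_M\}$ so that $|F_1| \leq \cdots \leq |F_M|$, and for each $i$ define the polynomial $f_i(x) = \prod_{l \in \cL,\ l < |F_i|} \left( \sum_{j \in F_i} x_j - l \right)$ on $\{0,1\}^n$. Then $f_i(\mathbf{1}_{F_i}) \neq 0$ while $f_i(\mathbf{1}_{F_j}) = 0$ for all $j < i$, since $|F_j \cap F_i| \in \cL$ and $|F_j \cap F_i| \leq |F_j| \leq |F_i|$ (with strict inequality ruled out only when it does not matter). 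This triangular pattern shows the multilinearizations of $f_1, \ldots, f_M$ are linearly independent in the space of multilinear polynomials of degree at most $\ell$ in $n$ variables, whose dimension is $\sum_{i=0}^{\ell}\binom{n}{i}$. You would need to supply an argument of this kind (or reproduce Frankl and Wilson's original one) for the statement actually under consideration.
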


\begin{proof}[Proof of \cref{prop:diag}.]
	Assume for contradiction that there is a good, $ (r-1) $-stabilised colouring $ \chi \in C_r(m+1,n) $ and fix a column index $ r \leq j \leq n $. Proceeding as in the proof of \cref{prop:gk}, we find a partition $ \cP_j $ of $ [m+1] $ into at most $ r^{r-1} $ $\{c_1, \ldots, c_{r-1} \}$-independent sets (in $\chi_j$). By the pigeonhole-principle, there is some $ F_j \in \cP_j $ of size at least $ r^r - \lfloor r/4 \rfloor + 1 $.
    
    We show that $ \cF = \{F_r, \ldots, F_n \} $ is $ \cL $-intersecting for $ \cL = \{r- \lfloor r/4 \rfloor +1 , \ldots, r \} $. We have $ | I_1 \cap I_2 | \leq r $ for every $ i \not = j $, $ I_1 \in \cP_i $ and $ I_2 \in \cP_j$, because $ I_1 \cap I_2 $ induces a $ c_r $-monochromatic clique in $ G(\chi_i,\chi_j) $. Hence, if $ \cF$ were not $\cL$-intersecting, there were indices $i$ and $j$, $ r \leq i < j \leq n $, with $| F_i \cap F_j| \leq r - \lfloor r/4 \rfloor $. Since $ F_i = \bigcup_{I \in \mathcal P_j} F_i \cap I $, we then have $ |F_i| \leq |\mathcal P_j| r - \lfloor r/4 \rfloor \leq r^r - \lfloor r/4 \rfloor$, a contradiction.
    
    By \cref{thm:fw}, $ \cF $ has size at most $ \sum_{i=0}^{\lfloor r/4 \rfloor} \binom{m}{i}$ and hence \[ n \leq  \sum_{i=0}^{\lfloor r/4 \rfloor} \binom{m}{i} + r - 1     < \frac12 r^{\binom{r+1}{2}},\] a contradiction.
\end{proof}

Finally, starting the iteration with \cref{prop:diag}, we deduce \cref{thm:diag}.

\section{Acknowledgement}
Significant part of the research was done when the author was participating in the Ramsey DocCourse programme in Prague 2016. He would like to thank Jaroslav Nešetřil and Jan Hubička for organising the course and David Conlon for introducing the problem during his lectures. The author would also like to thank Peter Allen, Julia B\"ottcher and Jozef Skokan for helpful comments on this note.

\providecommand{\bysame}{\leavevmode\hbox to3em{\hrulefill}\thinspace}
\providecommand{\MR}{\relax\ifhmode\unskip\space\fi MR }
\providecommand{\MRhref}[2]{%
  \href{http://www.ams.org/mathscinet-getitem?mr=#1}{#2}
}
\providecommand{\href}[2]{#2}

\end{document}